\numberwithin{equation}{section}
\theoremstyle{plain}
\newtheorem{theorem}{Theorem}[section]
\newtheorem{lemma}[theorem]{Lemma}
\newtheorem{definition}[theorem]{Definition}
\newtheorem{rem}[theorem]{Remark}
\newtheorem{exa}[theorem]{Example}  
\newtheorem{notation}[theorem]{Notation}
\begin{document}

\title{On The Structure and Automorphism Group  \\ of Finite Alexander Quandles}
\date{July 15, 2008}
\author{Amiel Ferman\footnote{This article was submitted as part of the first author's PhD Thesis at Bar-Ilan University, Ramat-Gan Israel, 
written under the supervision of Mina Teicher and Tahl Nowik}, Tahl Nowik, Mina Teicher \\ \\ 
Department of Mathematics, Bar-Ilan University \\ Ramat-Gan, Israel }
\maketitle

\begin{abstract}

We prove that an Alexander quandle of prime order 
is generated by any pair of distinct elements. Furthermore, we prove for such a quandle that 
any ordered pair of distinct elements can be sent to any other such pair by an automorphism of the quandle.

\end{abstract}

\section{Introduction}

Quandles were first introduced by Joyce (\cite{Joyce1,Joyce2}) as algebraic
invariants of classical knots and links. For an introduction to the use of
quandles as computable invariants of framed links in $3$-manifolds see \cite{FeRo}.
Another important application of quandles was given by Yetter in \cite{Yetter} as a
means to study braid monodromies of algebraic surfaces. 

Quandles (see Definition~\ref{quand}) can be considered as an abstraction
of groups in the sense that the only binary operation is the analog of the conjugation operation in the group.

In this article we study a particular kind of quandle, called an Alexander quandle (see Definition~\ref{alex}).
It is known that all finite connected quandles with a prime number of elements or with a square of a prime number
of elements are isomorphic to an Alexander quandle (see \cite{EGS} and \cite{Gr} respectively and see 
Definition~\ref{CONNECT} for the notion of a connected quandle). 
Furthermore, studies have shown that a large number of quandles with a small number of elements 
are isomorphic to Alexander quandles (see for example \cite{NeHo}). These facts and the fact
that Alexander quandles are relatively easy to study, due to the arithmetic flavor of their 
definition, serve as a motivation for their study. Finite Alexander quandles were studied in \cite{Nel}
where an arithmetical condition was given as a means to determine whether two such quandles are isomorphic.

In this article we prove that an Alexander quandle of prime order 
is generated by any pair of distinct elements. Furthermore, we prove for such a quandle that 
any ordered pair of distinct elements can be sent to any other such pair by an automorphism of the quandle.

Our paper is organized as follows :

In \textbf{section~\ref{sec quand intro}} we give a short introduction to quandles which is necessary
for the statements of our results and prove a few important lemmas. In \textbf{section~\ref{section alex}}
we prove our main results as follows~: In \textbf{subsection~\ref{subsec finite alex}} we prove a few
general formulae regarding finite Alexander quandles (not necessarily of prime order).
In \textbf{subsection~\ref{subsec alex prime}} we focus on finite Alexander quandles of prime order.
We prove that every Alexander quandle of prime order is generated by any pair of distinct elements. 
Furthermore, we prove for such a quandle that any ordered pair of distinct elements can be sent to 
any other such pair by an automorphism of the quandle.

\section{Basic Definitions and Examples of Quandles}\label{sec quand intro}

In this section we introduce the necessary definitions and properties of quandles
needed for our results on Alexander quandles.

We start with the definition of a quandle.

\begin{definition}[Quandle]\label{quand}
A \textbf{quandle} is a set $X$ with a binary operation written as 
$(a,b) \mapsto a^b$ and satisfies

\begin{list}{}{}
\item[(1)] For every $a,b \in X$ there exists a unique $c\in X$ such 
that $a=c^b$,
\item[(2)] For every $a,b,c\in X$ we have $(a^b)^c = (a^c)^{b^c} $, and
\item[(3)] For every $a\in X$ we have $a^a=a$.
\end{list}

Any subset of elements of a quandle $X$ which is closed under the quandle operation is called a \textbf{subquandle} of $X$.

\hfill $\square$
\end{definition}

\begin{definition}(\textbf{Quandle Homomorphisms})
Let $Q$ and $R$ be quandles. A \textbf{quandle homomorphism} from $Q$ to $R$ is a map $f\colon Q\to R$ 
that satisfies $f(x^y) = f(x)^{f(y)}$ for all $x,y\in Q$. An injective and surjective quandle 
homomorphism from $Q$ to itself is called a quandle \textbf{automorphism}.
\hfill $\square$
\end{definition}

Throughout the paper, we shall use the following

\begin{notation}
For a set $X$, we denote by $\mathbf{F}(X)$ the free group generated by the elements of $X$. \hfill $\bullet$
\end{notation}

Let us now give a few examples of quandles. We begin with

\begin{exa}
The \textbf{trivial quandle} consists of a set $X$ with quandle operation $x^y=x$ for any $x,y\in X$.
\end{exa}

The quintessential example of quandles is

\begin{exa}\label{conj}
Let $G$ be a group. $G$ can be considered as a quandle by letting $a^b \stackrel{def}{=} b^{-1}ab$.
Such a quandle is called a \textbf{Conjugation quandle} and is denoted by $G_{conj}$.
Note that any union of conjugacy classes of $G$ is a subquandle of $G_{conj}$.
Also note that if $G$ is abelian then $G_{conj}$ is a trivial quandle. \hfill $\square$
\end{exa}

Another important example is the following

\begin{exa}\label{alex}
Let $M$ be a $\mathbb Z[t^{\pm 1}]$-module ($t$ acts as some automorphism $\varphi \in Aut_{\mathbb Z}(M)$).
Then $M$ is a quandle, called the \textbf{Alexander quandle}, under the operation $a^b \stackrel{def}{=} t\cdot a + (1-t)\cdot b$ 

\hfill $\square$
\end{exa}

\begin{definition}[\textbf{Generated Subquandle}]
Let $X$ be a quandle and let $A$ be a set of elements of $X$. We say that $A$ generates the subquandle 
$X'$ of $X$ if the minimal (in terms of set containment) subquandle of $X$ which contains $A$ is $X'$. \hfill $\square$
\end{definition}

\begin{definition}[\textbf{Operator Group}]\label{operator}
Let $X$ be a quandle. Let $b\in X$ and consider the map $\rho_b\colon X\to X$ defined as follows

$$ \rho_b(x) = x^b \quad x\in X $$

As a consequence of the first quandle axiom (see Definition~\ref{quand}), the function $\rho_b : x \mapsto x^b$
is a bijection from $X$ to itself. Hence we can write 

$$a^{\bar{b}} = \rho_b^{-1}(a).$$

While $a^{\bar{b}}$ is a well defined member of $X$, it is not necessarily the case that 
$\bar{b} \in X$ but rather that $a^{b\bar{b}} = a^{\bar{b}b} = a$. Hence, if we identify
$\bar{b}$ with $b^{-1}$ we can define the value of an expression of the form $x^w$
where $x\in X$ and $w\in F(X)$ in the following way. Assume $w=w_1\cdots w_n$ where
each $w_i$ is equal to $x_i^{\pm 1}$ for some $x_i\in X$. Then define $x^w$ as follows

$$ x^w = (\cdots((x^{w_1})^{w_2})\cdots)^{w_n} = \rho_{w_n}\circ \cdots \circ \rho_{w_1}(x). $$ 

Denote by $N$ the following normal subgroup of $F(X)$

$$ N = \{ w \in F(X) \big| \ \forall x\in X \ : \ x^w=x  \}  $$

We call the group $F(X)/N$ the \textbf{operator group} of the quandle $X$, and denote it as $Op(X)$.

If $w_1,w_2\in F(X)$ have the same image in $F(X)/N$ (under the natural homomorphism),i.e. if

$$ \forall x\in X \quad x^{w_1} = x^{w_2}  $$

then we say that $w_1$ and $w_2$ are \textbf{operationally equivalent} and denote it as $w_1 \equiv w_2$. \hfill $\square$
\end{definition}

\begin{definition}\label{CONNECT}
A quandle $X$ is \textbf{connected} if the operation group of $X$, $Op(X)$, acts
transitively on $X$. In other words, $X$ is connected if for every $x,y \in X$ there exists $w\in Op(X)$ such that 

$$ x^w = y $$

(see Definition~\ref{operator} for the meaning of the expression $x^w$)
\hfill $\square$
\end{definition}

\begin{lemma}\label{lemma extended axiom}

Let $X$ be a quandle. Then 

\begin{equation}\label{eq quand axiom 2}
(a^b)^c = (a^c)^{b^c} \quad a,b\in X, \quad c\in F(X)
\end{equation}

where, as explained in Definition~\ref{operator}, 
$F(X)$ acts on elements of $X$ by the quandle operation of $X$. 

Furthermore,

\begin{equation}\label{eq quand axiom 2'}
\quad a^{b^c} = a^{c^{-1} b c} \quad a,b\in X, \quad c\in F(X)
\end{equation}

\end{lemma}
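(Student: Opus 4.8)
The plan is to prove both identities by induction on the word length of $c\in F(X)$, reducing everything to the two basic cases $c = d$ and $c = \bar d$ for a single generator $d\in X$.

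\textbf{Proof of \eqref{eq quand axiom 2}.} First I would establish the base cases. For $c = d\in X$, the identity $(a^b)^d = (a^d)^{b^d}$ is precisely axiom (2) of Definition~\ref{quand}. For $c = \bar d$, I want $(a^b)^{\bar d} = (a^{\bar d})^{b^{\bar d}}$. Writing $a' = a^{\bar d}$ and $b' = b^{\bar d}$ (so that $a = a'^{\,d}$, $b = b'^{\,d}$), axiom (2) gives $(a'^{\,d})^{b} = (a'^{\,d})^{b'^{\,d}} = (a'^{\,b'})^{d}$; that is, $a^b = (a'^{\,b'})^d$, and applying $\rho_d^{-1}$ to both sides yields $(a^b)^{\bar d} = a'^{\,b'} = (a^{\bar d})^{(b^{\bar d})}$, as desired. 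For the inductive step, write $c = c' e$ where $e = d^{\pm 1}$ for some $d\in X$ and $c'$ is shorter. Then
\begin{equation*}
(a^b)^c = \bigl((a^b)^{c'}\bigr)^e = \bigl((a^{c'})^{b^{c'}}\bigr)^e,
\end{equation*}
using the inductive hypothesis; now apply the already-established single-generator case (with the elements $a^{c'}$ and $b^{c'}$ of $X$ in the roles of $a$ and $b$) to get $\bigl((a^{c'})^{e}\bigr)^{((b^{c'})^{e})} = (a^{c})^{(b^{c})}$, which is the claim.

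\textbf{Proof of \eqref{eq quand axiom 2'}.} Again induct on the length of $c$. For $c$ a single generator $d\in X$, \eqref{eq quand axiom 2} gives $a^{b^d} = (a^{b})^{?}$... more directly: by \eqref{eq quand axiom 2} applied with $c=d$ we have $(a^{d^{-1}})^{b} $ related to the desired expression, so I would instead argue: set $x = a^{d^{-1}}$, so $a = x^d$; then \eqref{eq quand axiom 2} reads $(x^{d})^{?}$. Cleanest is: from \eqref{eq quand axiom 2}, $a^{b^c}$ compared with $a^{c^{-1}bc}$ — observe $(a^{c^{-1}})^{b} $ ... Let me instead directly verify for $c=d\in X$ by applying \eqref{eq quand axiom 2} to the pair $a^{\bar d}, b$ and multiplying through: $((a^{\bar d})^b)^d = ((a^{\bar d})^d)^{(b^d)} = a^{b^d}$, hence $a^{b^d} = a^{\bar d\, b\, d}$, which is \eqref{eq quand axiom 2'} for $c = d$; the case $c = \bar d$ is symmetric, using \eqref{eq quand axiom 2} for the pair $a^d, b$. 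For the inductive step with $c = c'e$, $e = d^{\pm1}$: we have $b^c = (b^{c'})^e$, so $a^{b^c} = a^{(b^{c'})^{e}} = a^{e^{-1}(b^{c'})e}$ by the base case, and then $a^{e^{-1}(b^{c'})e} = a^{e^{-1}c'^{-1}b\,c'e} = a^{c^{-1}bc}$ by the inductive hypothesis applied to $c'$ (after peeling off $e^{-1}$ and $e$, which commute through the action as honest group elements of $F(X)$).

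The proof is essentially routine once the inductive framework is set up, so there is no deep obstacle; the only thing to be careful about is bookkeeping — namely, that in \eqref{eq quand axiom 2} the exponent $c$ ranges over $F(X)$ while $b$ must stay in $X$, so in the inductive step one must feed $b^{c'}\in X$ (not an arbitrary word) into the single-generator case, and one should double-check that the base case $c=\bar d$ genuinely needs only quandle axioms (1) and (2) and not some unstated hypothesis. I would also remark that \eqref{eq quand axiom 2'} can alternatively be read off from \eqref{eq quand axiom 2} by noting both sides describe the same element once one knows $\rho_{b^c} = \rho_c^{-1}\rho_b\rho_c$ as maps $X\to X$, which is just the operator-group reformulation; I would mention this as a one-line alternative but carry out the inductive argument as the primary proof.
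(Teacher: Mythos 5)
Your proof is correct and follows essentially the same route as the paper: identity \eqref{eq quand axiom 2} is proved by induction on the word length of $c$, with the single-letter base cases handled exactly as in the paper (the second quandle axiom for a positive letter, and the same conjugate-and-cancel computation for an inverse letter), and the inductive step peeling off the last letter. The only divergence is that you prove \eqref{eq quand axiom 2'} by a second induction, whereas the paper deduces it from \eqref{eq quand axiom 2} in one line via $a^{b^c} = ((a^{c^{-1}})^{c})^{b^c} = ((a^{c^{-1}})^{b})^{c} = a^{c^{-1}bc}$ --- the shortcut you yourself note at the end --- so the difference is purely cosmetic.
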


\begin{proof}

Let us prove \eqref{eq quand axiom 2} for $c\in F(X)$ by induction
on the length of $c$ as a word in $F(X)$. 

For the induction base, we have to prove \eqref{eq quand axiom 2} for $c$ of length $1$, i.e., for $c^{\pm 1}$ when
$c\in X$.

\eqref{eq quand axiom 2} holds for any $c\in X$
since in this case \eqref{eq quand axiom 2} is just the second quandle axiom of $X$ (see Definition~\ref{quand}).

Now let $c\in X$ and let us prove \eqref{eq quand axiom 2} for $c^{-1}$~: 

\begin{equation}\label{second axiom inverse}
(a^{c^{-1}})^{b^{c^{-1}}} = (((a^{c^{-1}})^{b^{c^{-1}}})^{c})^{c^{-1}} 
\end{equation}

Now note that according to the second quandle axiom (Definition~\ref{quand}), where we put $a^{c^{-1}}$ instead
of $a$ and $b^{c^{-1}}$ instead of $b$, we have

$$ ((a^{c^{-1}})^{b^{c^{-1}}})^{c} = ((a^{c^{-1}})^{c})^{ (b^{c^{-1}})^c  } = a^b. $$

Hence, continuing \eqref{second axiom inverse}, we have

\begin{equation}\label{second axiom inverse'}
(a^{c^{-1}})^{b^{c^{-1}}} = (((a^{c^{-1}})^{b^{c^{-1}}})^{c})^{c^{-1}} = (a^b)^{c^{-1}}. 
\end{equation}

So $(a^{c^{-1}})^{b^{c^{-1}}} = (a^b)^{c^{-1}}$ and the proof of the induction base is complete.

For the induction step, assume (\ref{eq quand axiom 2}) holds for words $c\in F(X)$ of length $<k$ and let
$c = c_1\cdots c_k$, then

$$ (a^b)^c = ((a^b)^{c_1\cdots c_{k-1}})^{c_k} = ((a^{c_1\cdots c_{k-1}})^{b^{c_1\cdots c_{k-1}}})^{c_k} = $$

$$ = ((a^{c_1\cdots c_{k-1}})^{c_k})^{  (b^{c_1\cdots c_{k-1}})^{c_k}  } = (a^c)^{b^c} $$

where the second equality follows from the induction hypothesis and the third equality follows from the second quandle axiom of $X$
(if $c_k = x$ for some $x\in X$) or from \eqref{second axiom inverse'} (if $c_k = x^{-1}$ for some $x\in X$).

Let us now prove (\ref{eq quand axiom 2}) by showing that it is equivalent to $(\ref{eq quand axiom 2'})$. 

Let $a,b\in X$ and $c\in F(X)$. Assume that $(\ref{eq quand axiom 2})$ holds, then

$$ a^{b^c} = ( (a^{c^{-1}})^{c} )^{b^c} = ((a^{c^{-1}})^b)^c = a^{c^{-1} b c} $$

On the other hand, if $(\ref{eq quand axiom 2'})$ holds. Then

$$ (a^c)^{b^c} = a^{c c^{-1} b c} = a^{bc} = (a^b)^c $$

Hence  $(\ref{eq quand axiom 2})$ and axiom $(\ref{eq quand axiom 2'})$ are indeed equivalent.

\end{proof}

\begin{notation}
For a set $X$, we denote by $\mathbf{S}(X)$ the permutation group of the elements of $X$. \hfill $\bullet$
\end{notation}

\begin{lemma}\label{lemma finite quand}
Let $X$ be a quandle. Let $\rho_x \in Aut(X)$ be the automorphism
defined by $\rho_x(y) = y^x$ for any $y\in X$ (see Definition~\ref{operator}).

Then the map $\mu\colon X\to \mathbf{S}(X)_{conj}$ defined by

$$ \mu(x) = \rho_x $$

is a quandle homomorphism.

\end{lemma}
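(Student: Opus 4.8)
The plan is to unravel what it means for $\mu$ to be a quandle homomorphism from $X$ into $\mathbf{S}(X)_{conj}$. Since the target is a conjugation quandle (Example~\ref{conj}), the operation there is $\sigma^\tau = \tau^{-1}\sigma\tau$, so the claim $\mu(y^x) = \mu(y)^{\mu(x)}$ unwinds to the identity $\rho_{y^x} = \rho_x^{-1}\circ \rho_y \circ \rho_x$ in $\mathbf{S}(X)$. So the whole lemma reduces to verifying this single operator identity, after first checking that $\mu$ lands in $\mathbf{S}(X)$ at all, which is exactly the content of the first quandle axiom as already recorded in Definition~\ref{operator} (each $\rho_x$ is a bijection).

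The core computation I would carry out is to evaluate both sides of $\rho_{y^x} = \rho_x^{-1}\circ\rho_y\circ\rho_x$ on an arbitrary element $z\in X$. The left side is $\rho_{y^x}(z) = z^{(y^x)}$. For the right side, reading the composition right-to-left in the sense of Definition~\ref{operator}, I get $\rho_x^{-1}(\rho_y(\rho_x(z))) = \bigl((z^x)^y\bigr)^{\bar x} = z^{x y \bar x}$ as an element acted on by the word $x y x^{-1} \in F(X)$. Now I invoke Lemma~\ref{lemma extended axiom}: equation~\eqref{eq quand axiom 2'} with $c = x$ gives exactly $z^{(y^x)} = z^{x^{-1} y x}$ — wait, I need to match orientations carefully, so the precise step is to apply \eqref{eq quand axiom 2'} (or rather its mirror, obtained by replacing $x$ with $x^{-1}$) to identify $z^{(y^x)}$ with $z$ acted on by the conjugate word $x y x^{-1}$ or $x^{-1} y x$ depending on the paper's conventions for how $\rho$ composes. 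Either way, the two sides agree for all $z$, hence the permutations are equal, which is the desired homomorphism identity.

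I would organize the write-up as: first note $\mu(x)=\rho_x$ is well-defined into $\mathbf{S}(X)$ by axiom~(1); then fix $x,y\in X$ and show $\mu(y^x)=\mu(x)^{-1}\mu(y)\mu(x)$ by evaluating on a general $z\in X$ and applying Lemma~\ref{lemma extended axiom}; then observe this is precisely the statement that $\mu(y^x) = \mu(y)^{\mu(x)}$ in $\mathbf{S}(X)_{conj}$, completing the proof. The main thing to be careful about — and really the only place an error could creep in — is the bookkeeping of the order of composition: the convention in Definition~\ref{operator} is that $x^{w_1 w_2} = \rho_{w_2}\circ\rho_{w_1}(x)$, i.e. the group acts on the right and composition of $\rho$'s is order-reversing relative to concatenation of words, so I must track whether the correct identity is $\rho_{y^x}=\rho_x^{-1}\rho_y\rho_x$ or $\rho_{y^x}=\rho_x\rho_y\rho_x^{-1}$, and likewise whether conjugation in the target $\mathbf{S}(X)_{conj}$ is by $\tau^{-1}\sigma\tau$ or $\tau\sigma\tau^{-1}$. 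Once these conventions are pinned down consistently, equation~\eqref{eq quand axiom 2'} of Lemma~\ref{lemma extended axiom} delivers the identity immediately, so there is no genuine obstacle beyond this bookkeeping.
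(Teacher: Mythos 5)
Your proposal is correct and follows essentially the same route as the paper: evaluate $\rho_{x^y}$ and the conjugated composition on an arbitrary $z\in X$ and invoke equation~\eqref{eq quand axiom 2'} of Lemma~\ref{lemma extended axiom}. The composition-order bookkeeping you flag as the only delicate point is exactly what the paper settles by declaring that permutations in $\mathbf{S}(X)$ act on the right, so that $z^{x^y}=z^{y^{-1}xy}=(z)\rho_y^{-1}\circ\rho_x\circ\rho_y$ matches the conjugation $a^b=b^{-1}ab$ of Example~\ref{conj}.
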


\begin{proof}

$\newline$

For the purposes of this proof let us consider the permutations in $\mathbf{S}(X)$ as right actions on $X$.

Now for any $z\in X$ we have

$$ \mu(x^y)(z) = (z)\rho_{x^y} = z^{x^y} = z^{y^{-1} x y} = (z)\rho_{y}^{-1} \circ \rho_x \circ \rho_{y}   $$

where the third equation follows from \eqref{eq quand axiom 2'} in Lemma~\ref{lemma extended axiom}. Hence

$$ \mu(x^y) = \rho_y^{-1}\circ \rho_x \circ \rho_{y} =  \rho_x^{\rho_y} $$

and so $\mu$ is indeed a quandle homomorphism.

\end{proof}

\section{Alexander Quandles}\label{section alex}

Let us first give the definition of an Alexander quandle~:

\begin{definition}
Given a $\mathbb Z[t^{\pm 1}]$-module $M$, where $t$ acts as an automorphism $\varphi \in Aut_{\mathbb Z}(M)$, 
we define a quandle structure on $M$ by the quandle operation $a^b \stackrel{def}{\equiv} ta + (1-t)b$.
This is called an Alexander quandle. \hfill $\square$
\end{definition}

\begin{rem}\label{alex_conj}
Note that the requirement $t \in Aut_{\mathbb Z}(M)$ is a sufficient condition for the first quandle axiom to hold (see Definition \ref{quand}).

Furthermore, note that requiring $(1-t) \in Aut_{\mathbb Z}(M)$ is a sufficient 
condition for the quandle map $a \mapsto \rho_a$ (see Lemma~\ref{lemma finite quand} ), where $a \in M$ and $\rho_a(b) = b^a$, to be an injection since

$$ \rho_a = \rho_b \Longleftrightarrow   \forall c\in M \quad \rho_a(c) = \rho_b(c) \Longleftrightarrow   $$

$$ \forall c\in M \quad tc + (1-t)a = tc + (1-t)b \Longleftrightarrow  (1-t)(a-b) = 0 $$

hence requiring that $1-t \in Aut_{\mathbb Z}(M)$ is enough to ensure that $\rho_a = \rho_b$ iff $a=b$
which means that the quandle can be considered as conjugacy quandle (see Example \ref{conj}).

Furthermore, if we consider for any $a,b\in M$ the equation $a^c=b$, which is equivalent to

\begin{equation}\label{108.1}
ta + (1-t)c = b
\end{equation}

and if $(1-t)$ is invertible, the solution of \eqref{108.1} is given by $c = (1-t)^{-1}(b - ta)$, so that
$M$ is connected. Hence, if $1-t$ is invertible and $M$ is finite, we can identify $M$
with a subset of a conjugacy class in the permutation group
(where the quandle operation is conjugation in the symmetric group). \hfill $\square$
 
\end{rem}

\subsection{Finite Alexander Quandles}\label{subsec finite alex}

\begin{lemma}\label{power} 

Let $Q$ be a finite Alexander quandle. 
For any $k\in \mathbb Z$ and any $a,b\in Q$ we have

\begin{equation}\label{eq power alex}
a^{b^k} = t^{k}a + (1-t^{k})b.
\end{equation}

Furthermore, if $m$ is the order of $\ t\in Aut(Q)$ then for any $k\in \mathbb Z$

\begin{equation}\label{42.1}
a^{b^k} = a^{b^{k + m}} 
\end{equation}

and

\begin{equation}\label{42.2}
\{ a^{b^k} \ | \ k \in \mathbb Z \} =  \{ a^{b^k} \ | \ k = 0\ldots m-1 \} 
\end{equation}

\end{lemma}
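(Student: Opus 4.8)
The plan is to prove \eqref{eq power alex} first by induction on $k$ for $k \geq 0$, then extend to negative $k$, and finally deduce \eqref{42.1} and \eqref{42.2} as easy consequences of the formula.

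For the base cases: when $k = 0$ we have $b^0$ equal to the identity of $F(Q)$, so $a^{b^0} = a = t^0 a + (1-t^0)b$, trivially. When $k = 1$, $a^{b^1} = a^b = ta + (1-t)b$ by the definition of the Alexander quandle operation. For the inductive step with $k \geq 1$, I would write $a^{b^{k+1}} = (a^{b^k})^b = t(a^{b^k}) + (1-t)b$, then substitute the inductive hypothesis $a^{b^k} = t^k a + (1-t^k) b$ and simplify: $t(t^k a + (1-t^k)b) + (1-t)b = t^{k+1} a + (t - t^{k+1})b + (1-t)b = t^{k+1} a + (1 - t^{k+1})b$, using that $M$ is a $\mathbb{Z}[t^{\pm 1}]$-module so these manipulations are legitimate. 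For negative $k$, I would argue similarly: $a^{b^{-1}} = \rho_b^{-1}(a)$ is the unique solution $c$ of $tc + (1-t)b = a$, which by the invertibility discussion (Remark~\ref{alex_conj}, using $t \in Aut_{\mathbb{Z}}(M)$) is $c = t^{-1}a + (1 - t^{-1})b = t^{-1}a + (1-t^{-1})b$; then induct downward, or alternatively observe that the formula $a^{b^k} = t^k a + (1-t^k)b$ for $k \geq 0$ combined with $a^{b^k b^{-k}} = a$ forces the negative-exponent values to agree with the formula.

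Once \eqref{eq power alex} is established for all $k \in \mathbb{Z}$, equation \eqref{42.1} follows immediately: if $m$ is the order of $t$ in $Aut(Q)$, then $t^{k+m} = t^k$, so $t^{k+m} a + (1 - t^{k+m})b = t^k a + (1 - t^k) b$, giving $a^{b^{k+m}} = a^{b^k}$. Then \eqref{42.2} is a set-theoretic consequence: the inclusion $\supseteq$ is obvious, and for $\subseteq$, given any $k \in \mathbb{Z}$ write $k = qm + r$ with $0 \leq r \leq m-1$, and apply \eqref{42.1} repeatedly ($q$ times) to get $a^{b^k} = a^{b^r}$, which lies in the right-hand set.

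I do not anticipate a serious obstacle here; the only point requiring a little care is making the negative-$k$ case rigorous — specifically, justifying that $\rho_b^{-1}$ acts as $a \mapsto t^{-1}a + (1-t^{-1})b$, which relies on $t$ being an automorphism of $M$ so that $t^{-1} \in \mathbb{Z}[t^{\pm 1}]$ acts on $M$. Everything else is a routine computation in the module $M$, and the finiteness of $Q$ is used only to guarantee that $t$ has finite order $m$ for the last two formulas.
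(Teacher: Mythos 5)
Your proposal is correct and follows essentially the same route as the paper: upward induction for $k\geq 0$ using the Alexander operation, the uniqueness from the first quandle axiom to identify $a^{b^{-1}} = t^{-1}a + (1-t^{-1})b$, downward induction for negative exponents, and then \eqref{42.1} from $t^{k+m}=t^k$ with \eqref{42.2} as an immediate consequence. No gaps.
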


\begin{proof}
$\newline$ Let us first prove \eqref{eq power alex} for $k\in \mathbb N$ by induction on $k$. 
The case $k=0$ clearly holds, and assuming for $k$ we have

$$  a^{b^{k+1}} = (t^{k}a + (1-t^{k})b)^{b} = $$

$$ = t(t^{k}a + (1-t^{k})b) + (1-t)b = t^{k+1}a + (1-t^{k+1})b. $$

Now, according to the first quandle axiom (see Definition~\ref{quand}) and Definition~\ref{operator}, $a^{b^{-1}}$
is the unique element in $Q$ such that

$$ (a^{b^{-1}})^b = a $$

but 

$$ (t^{-1}a + (1-t^{-1})b)^b = t(t^{-1}a + (1-t^{-1})b) + (1-t)b = a $$

Hence 

\begin{equation}\label{42.3}
a^{b^{-1}} = t^{-1}a + (1-t^{-1})b 
\end{equation}

Continuing with the induction proof for negative powers, let $k\in \mathbb N$ and assume the hypothesis for $-k$. 
Then, using \eqref{42.3}, we have

$$  a^{b^{-k-1}} = (t^{-k}a + (1-t^{-k})b)^{b^{-1}} = $$

$$ = t^{-1}(t^{-k}a + (1-t^{-k})b) + (1-t^{-1})b = t^{-k-1}a + (1-t^{-k-1})b. $$

To prove \eqref{42.1}, simply note that for any $k$, according to \eqref{eq power alex} and since $m$ is the order of $t$, we have that

$$ a^{b^k} = t^{k}a + (1-t^{k})b = t^{k + m} a + (1-t^{k + m})b = a^{b^{k + m}} $$

\eqref{42.2} is now a direct corollary from \eqref{42.1}.

\end{proof}

\begin{lemma}\label{sum_cycle} Let $M$ be a finite Alexander quandle and let $m$ be the order of $\ t\in Aut_{\mathbb Z}(M)$, 
furthermore assume that $1-t\in Aut_{\mathbb Z}(M)$ then for any $a,b \in M$ and any $k\in \mathbb N$

$$ a + a^b + \cdots + a^{b^{k}} = (1 + t + \cdots + t^k)a + (k - t - \cdots - t^k )b $$

\flushleft and in particular 

$$ a + a^b + \cdots + a^{b^{m-1}} = m\cdot b $$
\end{lemma}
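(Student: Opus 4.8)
The plan is to reduce everything to the explicit formula for $a^{b^k}$ already proved in Lemma~\ref{power}, namely $a^{b^j} = t^j a + (1-t^j)b$, and then simply sum the resulting expressions, treating the coefficients as elements of $\mathbb Z[t^{\pm 1}]$ acting on $M$. First I would substitute this formula term by term, obtaining $\sum_{j=0}^{k} a^{b^j} = \sum_{j=0}^{k}\bigl(t^j a + (1-t^j)b\bigr)$, split the sum into its $a$-part and $b$-part, and collect coefficients. The coefficient of $a$ is $\sum_{j=0}^{k} t^j = 1 + t + \cdots + t^k$, and the coefficient of $b$ is $\sum_{j=0}^{k}(1-t^j) = (k+1) - (1 + t + \cdots + t^k) = k - t - \cdots - t^k$. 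This yields the first displayed identity directly; it is a routine manipulation of finite sums with no real obstacle.

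For the ``in particular'' statement I would specialize to $k = m-1$, where $m$ is the order of $t$. The key point is the operator identity $(1-t)\bigl(1 + t + \cdots + t^{m-1}\bigr) = 1 - t^m = 0$ on $M$, since $t^m = \mathrm{id}$. This is exactly the place where the hypothesis $1-t \in Aut_{\mathbb Z}(M)$ is needed: applying $(1-t)^{-1}$ we conclude that $1 + t + \cdots + t^{m-1}$ acts as the zero endomorphism of $M$, i.e.\ $(1 + t + \cdots + t^{m-1})x = 0$ for every $x\in M$. Substituting $k = m-1$ into the first identity, the coefficient of $a$ is then $0$, while the coefficient of $b$ is $(m-1) - (t + \cdots + t^{m-1}) = (m-1) - \bigl((1 + t + \cdots + t^{m-1}) - 1\bigr) = m$. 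Hence $a + a^b + \cdots + a^{b^{m-1}} = m\cdot b$.

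The only subtlety worth flagging — and it is minor — is being careful that the cancellation $1 + t + \cdots + t^{m-1} = 0$ is an identity of operators on $M$ that genuinely requires invertibility of $1-t$, not merely $t^m = \mathrm{id}$ (indeed, without the invertibility hypothesis one only gets that $1 + t + \cdots + t^{m-1}$ annihilates the image of $1-t$). Once that is stated cleanly, the rest is bookkeeping with finite sums, so I expect the write-up to be short.
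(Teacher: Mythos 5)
Your proposal is correct and follows essentially the same route as the paper: both rest on the formula $a^{b^j} = t^j a + (1-t^j)b$ from Lemma~\ref{power} for the first identity (the paper phrases the summation as an induction on $k$, which is only a cosmetic difference from your direct term-by-term sum), and both obtain the ``in particular'' statement from the cancellation $(1-t)(1+t+\cdots+t^{m-1}) = 1-t^m = 0$ together with the invertibility of $1-t$. Your closing remark about where the hypothesis $1-t\in Aut_{\mathbb Z}(M)$ is genuinely used matches the paper's argument exactly.
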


\begin{proof}
Let us first prove by induction that for $k\in \mathbb N$ we have

$$ a + a^b + \cdots + a^{b^{k}} = (1 + t + \cdots + t^k)a + (k - t - \cdots - t^k)b $$

\flushleft this clearly holds for $k=0$, now assuming for $k$ and using Lemma~\ref{power} we have

$$ a + a^b + \cdots + a^{b^{k}} + a^{b^{k+1}} = $$ 
$$ = (1 + t + \cdots + t^k)a + (k - t - \cdots - t^k)b + t^{k+1}a + (1-t^{k+1})b = $$
$$ = (1 + t + \cdots + t^{k+1})a + (k+1 - t - \cdots - t^{k+1})b$$

\flushleft now since (recall that $m$ is the order of $t$)

$$ (1-t)(1 + \cdots + t^{m-1}) = 1-t^m = 0 $$

\flushleft hence by our assumption that $1-t \in Aut_{\mathbb Z}(M)$ we must have that

$$ 1 + \cdots + t^{m-1} = 0 $$

\flushleft and this means that 

$$  a + a^b + \cdots + a^{b^{m-1}} = (m-1+1)\cdot b = m\cdot b $$

\end{proof}

\begin{lemma}\label{rels}
Let $M$ be an Alexander quandle, then for any $a,b \in M$ and $k_i \in \mathbb Z$ we have that~:

For $n$ odd~:

\begin{eqnarray}
a^{b^{k_1} a^{k_2} \cdots b^{k_{n-2}} a^{k_{n-1}} b^{k_n}} = (\sum_{i=1 \ldots n} (-1)^{i+1} t^{ k_i + \cdots + k_n } )(a-b) + b\label{eq rels 1} 
\end{eqnarray}

For $n$ even~:

\begin{eqnarray}
a^{b^{k_1} a^{k_2} \cdots b^{k_{n-1}} a^{k_n}} = (\sum_{i=1 \ldots n} (-1)^{i+1} t^{ k_i + \cdots + k_n } )(a-b) + a\label{eq rels 3} 
\end{eqnarray}

\end{lemma}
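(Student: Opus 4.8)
The plan is to prove both \eqref{eq rels 1} and \eqref{eq rels 3} simultaneously by induction on $n$, peeling off the innermost exponent one letter at a time. The key computational engine is Lemma~\ref{power}, which tells us that for a single power $c^{k}$ of an element $c \in M$ we have $x^{c^{k}} = t^{k}x + (1-t^{k})c = t^{k}(x-c) + c$; the last form, written in terms of $x-c$, is the one that makes the telescoping transparent. Note also the elementary identity $x^{c^{k}} - c = t^{k}(x-c)$, which I would isolate as the real induction invariant.

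First I would set up the induction. For $n=1$ (odd case), \eqref{eq rels 1} reads $a^{b^{k_1}} = (t^{k_1})(a-b) + b$, which is exactly \eqref{eq power alex} of Lemma~\ref{power}. This is the base case. For the inductive step, suppose the formula holds for words of length $n-1$. Given a word $w = b^{k_1} a^{k_2} \cdots$ of length $n$, write it as $w = w' \cdot (\text{last letter})$ where $w'$ has length $n-1$ and the last letter is $a^{k_n}$ if $n$ is even or $b^{k_n}$ if $n$ is odd (matching the alternating $b,a,b,a,\dots$ pattern read from the left). Then $a^{w} = (a^{w'})^{(\text{last letter})^{k_n}}$... wait — I need to be careful about the order of action. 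Since $x^w$ for $w = w_1 \cdots w_n$ is defined in Definition~\ref{operator} as $\rho_{w_n} \circ \cdots \circ \rho_{w_1}(x)$, the \emph{innermost} (first-applied) block is $b^{k_1}$, and the outermost is the last block. So I would instead induct by stripping the \emph{first} block $b^{k_1}$: compute $a^{b^{k_1}} = t^{k_1}(a-b)+b$ by Lemma~\ref{power}, then apply the length-$(n-1)$ word $a^{k_2} b^{k_3}\cdots$ to this element. Here the subtlety is that the length-$(n-1)$ tail word has its letters from $\{a,b\}$ but starts with $a^{k_2}$, so it is of the ``wrong parity type'' to directly quote the induction hypothesis; I would handle this by proving a slightly more symmetric statement allowing the alternation to start with either $a$ or $b$, or by noting the roles of $a$ and $b$ are interchangeable in the formula up to swapping the $+a$/$+b$ tail term and relabeling.

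The cleanest route: prove by induction on $n$ the unified claim that for any alternating word $w$ of length $n$ in $a,b$ (starting with whichever of $a,b$), applying $w$ to the element $a$ yields $\bigl(\sum_{i=1}^n (-1)^{i+1} t^{k_i + \cdots + k_n}\bigr)(a-b)$ plus a ``tail'' which is $b$ if the innermost (first-applied) letter is $b$ and $a$ if it is $a$, \emph{provided} $n$ has the parity making $a$ the outermost-block base — and then check the parity bookkeeping matches \eqref{eq rels 1} (innermost letter $b^{k_1}$, $n$ odd) and \eqref{eq rels 3} (innermost letter $b^{k_1}$, $n$ even). The inductive step is then pure algebra: write $w = b^{k_1} \cdot v$ with $v$ of length $n-1$; $a^{b^{k_1}} = t^{k_1}a + (1-t^{k_1})b$; apply the induction hypothesis for $v$ acting on this element, using linearity of the Alexander operation in its first argument (so $(\lambda a + \mu b)^{v} = \lambda \cdot (a^v) + \mu \cdot (b^v)$ whenever $\lambda + \mu = 1$ — this holds because each $\rho_c$ is affine); then collect terms and reindex the sum, where the shift $k_i + \cdots + k_n \mapsto k_{i} + \cdots + k_n$ with the new leading $t^{k_1}$ factor absorbed produces the alternating sum with the sign flip encoded by $(-1)^{i+1}$.

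The main obstacle I anticipate is the indexing/parity bookkeeping rather than any deep idea: keeping straight which end of the word is applied first, matching the sign pattern $(-1)^{i+1}$ across the odd/even cases, and verifying that the ``$+b$'' in \eqref{eq rels 1} versus ``$+a$'' in \eqref{eq rels 3} comes out correctly from the parity of $n$. I would manage this by fixing once and for all the convention from Definition~\ref{operator} (rightmost letter acts last / is outermost), doing the base cases $n=1$ and $n=2$ explicitly to pin down the tail term, and then trusting the linear-algebra step to propagate it.
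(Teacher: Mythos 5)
Your first instinct was the right one, and the ``wait'' that made you abandon it was a mis-correction. Under the convention of Definition~\ref{operator} the last letter acts last, so $a^{uv}=(a^{u})^{v}$, and for $w=w'b^{k_n}$ one has exactly $a^{w}=(a^{w'})^{b^{k_n}}$: stripping the \emph{outermost} (last) block is legitimate, and it is precisely what the paper does. There the induction alternates between the two formulas: for $n$ odd, the induction hypothesis (the even case, $n-1$ blocks) gives $a^{w'}=\bigl(\sum_{i=1}^{n-1}(-1)^{i+1}t^{k_i+\cdots+k_{n-1}}\bigr)(a-b)+a$, and Lemma~\ref{power} with base $b$ turns this into $t^{k_n}(\cdots)+(1-t^{k_n})b$; every suffix sum $k_i+\cdots+k_{n-1}$ is multiplied by $t^{k_n}$ and becomes $k_i+\cdots+k_n$, the term $t^{k_n}(a-b)$ supplies the $i=n$ summand (with the correct sign since $n$ is odd), and the tail $b$ appears automatically. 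No affine-combination identity and no symmetrized statement are needed; the base case is $n=1$ via \eqref{eq power alex}.

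The route you actually committed to --- stripping the \emph{innermost} block $b^{k_1}$ --- does not close in the way your last paragraph describes. Writing $a^{w}=\bigl(t^{k_1}a+(1-t^{k_1})b\bigr)^{v}$ with $v=a^{k_2}b^{k_3}\cdots$ and using affineness, you need $a^{v}$ and $b^{v}$, and neither is directly an instance of your induction hypothesis: for $a^{v}$ the base equals the first letter, so you must first use $a^{a^{k_2}}=a$ to drop to an $(n-2)$-block word, and $b^{v}$ requires the $a\leftrightarrow b$-swapped version of the lemma. More seriously, the contribution $t^{k_1}\cdot a^{v}$ produces exponents $t^{k_1+k_i+\cdots+k_n}$ (with $k_2,\dots,k_{i-1}$ missing), which are \emph{not} of the suffix-sum form appearing in \eqref{eq rels 1} and \eqref{eq rels 3}; these unwanted terms disappear only through a cancellation between $t^{k_1}a^{v}$ and $-t^{k_1}b^{v}$, which collapses to the single $i=1$ term $t^{k_1+\cdots+k_n}$. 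So ``collect terms and reindex, with the new leading $t^{k_1}$ factor absorbed'' is not what happens in your decomposition --- that clean absorption is exactly the feature of the last-block decomposition you discarded. As written, your inductive step is therefore not a proof; it can be salvaged with the three extra ingredients just named, but the simpler fix is to revert to stripping the outermost block, which is the paper's argument.
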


\begin{proof}

Let us prove \eqref{eq rels 1} and \eqref{eq rels 3} by induction on $n$.
The case $n=1$ follows from \eqref{eq power alex} in Lemma~\ref{power}.


Assume \eqref{eq rels 1} and \eqref{eq rels 3} hold for $< n$, for $n$ odd. Using Lemma~\ref{power}, we have

$$ (a^{b^{k_1} a^{k_2} \cdots b^{k_{n-2}} a^{k_{n-1}}})^{b^{k_n}} = $$

$$ = t^{k_n} ( (\sum_{i=1 \ldots n-1} (-1)^{i+1} t^{ k_i + \cdots + k_{n-1} } )(a-b) + a) + (1-t^{k_n})b = $$

(since $n$ is odd, $(-1)^{n+1}=1$ and so)


$$ = (\sum_{i=1 \ldots n} (-1)^{i+1} t^{ k_i + \cdots + k_n } )(a-b) + b $$

and using the last equality we have


$$ (a^{b^{k_1} a^{k_2} \cdots a^{k_{n-1}} b^{k_{n}}})^{a^{k_{n+1}}} = $$

$$ = t^{k_{n+1}}((\sum_{i=1 \ldots n} (-1)^{i+1} t^{ k_i + \cdots + k_{n} } )(a-b) + b) + (1-t^{k_{n+1}})a = $$

(since $n+1$ is even, $(-1)^{n+2}=-1$ and so)


$$ = (\sum_{i=1 \ldots n+1} (-1)^{i+1} t^{ k_i + \cdots + k_{n+1} } )(a-b) + a. $$


\end{proof}

\subsection{Alexander Quandles of Prime Order}\label{subsec alex prime}

In this section we prove our two main results regarding
Alexander quandles of prime order. We determine that each such quandle is generated
by (any) two elements and we also describe its set of quandle automorphisms.

Fix $p$ to be some prime number. 

\begin{rem}\label{rem alex p}
In what follows we will consider $\mathbb Z_p$ as an Alexander quandle. Recall that
each group automorphism of $\mathbb Z_p$ is a multiplication by a number in $\mathbb Z_p^*$ and
so we will sometimes consider $t$ as a number in $\mathbb Z_p^*$. \hfill $\bullet$
\end{rem}

\begin{lemma}\label{size_cycle}
Let $p\in \mathbb N$ be some prime number and consider $Q=\mathbb Z_p$ as an Alexander quandle with
$t\in \mathbb Z_p^*$, $\ t \neq 1$ (see Remark~\ref{rem alex p}), and let $m$ be the order of $\ t\in Aut(\mathbb Z_p)$. Then
for any $a,b\in Q$, $a\neq b$,  we have

$$ |\{ a^{b^k} \ | \ k \in \mathbb N \}| = m $$

\end{lemma}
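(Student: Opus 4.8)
The plan is to use the explicit formula from Lemma~\ref{power}, namely $a^{b^k} = t^k a + (1-t^k)b = t^k(a-b) + b$, and count how many distinct values this takes as $k$ ranges over $\mathbb{N}$. Since the map $k \mapsto a^{b^k}$ depends on $k$ only through $t^k$, and $t$ has order $m$ in $\mathbb{Z}_p^*$, the values $t^k$ for $k = 0, 1, \dots, m-1$ are pairwise distinct, and by \eqref{42.1} the sequence is periodic with period $m$; so it suffices to show the $m$ values $a^{b^k}$ for $k = 0, \dots, m-1$ are pairwise distinct.

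The key step is the injectivity: for $0 \le k, \ell \le m-1$, if $a^{b^k} = a^{b^\ell}$ then $t^k(a-b) = t^\ell(a-b)$, hence $(t^k - t^\ell)(a-b) = 0$ in $\mathbb{Z}_p$. Because $p$ is prime, $\mathbb{Z}_p$ is a field, and since $a \neq b$ the element $a - b$ is a unit; therefore $t^k = t^\ell$, which forces $k = \ell$ since $t$ has order exactly $m$. This gives $|\{a^{b^k} : k \in \mathbb{N}\}| = m$. Combined with \eqref{42.2} of Lemma~\ref{power} (which already tells us the set equals $\{a^{b^k} : k = 0,\dots,m-1\}$, hence has at most $m$ elements), the injectivity argument pins the cardinality down to exactly $m$.

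There is essentially no serious obstacle here; the only things to be careful about are (i) invoking that $\mathbb{Z}_p$ being a field is exactly what makes $a - b \neq 0$ cancellable — this is where the primality hypothesis is genuinely used, as opposed to the more general finite Alexander quandle setting of the previous subsection — and (ii) making sure the hypothesis $t \neq 1$ is what guarantees $m \ge 2$, so the statement is non-degenerate (though the count $|\cdot| = m$ holds formally even if $m = 1$). I would state the two inclusions explicitly: the upper bound $\le m$ from \eqref{42.2}, and the lower bound $\ge m$ from the $m$ distinct values exhibited above, and conclude equality.
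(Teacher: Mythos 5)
Your proof is correct and follows essentially the same route as the paper: reduce to $k=0,\dots,m-1$ via Lemma~\ref{power}, then show distinctness from $(t^k-t^\ell)(a-b)=0$ using that $\mathbb{Z}_p$ is a field and $a\neq b$. No substantive differences.
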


\begin{proof}
By Lemma~\ref{power} we have that 

$$ \{ a^{b^k} \ | \ k \in \mathbb N \} =  \{ a^{b^k} \ | \ k = 0\ldots m-1 \} $$

now suppose $a^{b^i}=a^{b^j}$ for some $0\leq i,j < m$ then

$$ t^i a + (1-t^i)b = t^j a + (1-t^j)b $$

\flushleft or

$$ (t^i - t^j)(a-b)=0 $$

but according to our assumption $0\leq i,j < m$ and $m$ is the order of $t$, hence it must be
that $i=j$ for otherwise $t^i-t^j\neq 0$ would be a zero divisor in $\mathbb Z_p$.

\end{proof}

\begin{theorem}\label{gens}
Let $p\in \mathbb N$ be some prime number and consider $\mathbb Z_p$ as an Alexander quandle with
$t\in \mathbb Z_p^*$, $\ t \neq 1$ (see Remark~\ref{rem alex p}), and let $m$ be the order of $\ t\in Aut_{\mathbb Z}(\mathbb Z_p)$. 
Then any two elements $a,b\in Q$, $a\neq b$,  generate $Q$.
\end{theorem}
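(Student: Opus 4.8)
The plan is to show that the subquandle generated by $a$ and $b$ already contains all of $\mathbb{Z}_p$, by first producing many elements from $a,b$ via one-variable powers and then pushing those around by the module structure. Since $a \neq b$, Lemma~\ref{size_cycle} tells us that $S := \{\, a^{b^k} \mid k \in \mathbb{N} \,\}$ has exactly $m$ elements, and by Lemma~\ref{power} each such element equals $t^k a + (1-t^k)b = b + t^k(a-b)$. So $S = \{\, b + t^k(a-b) \mid k = 0,\dots,m-1\,\}$, i.e. $S - b = (a-b)\cdot\langle t\rangle$, a coset-translate of the multiplicative subgroup $\langle t \rangle \le \mathbb{Z}_p^\ast$ of order $m$, scaled by the nonzero element $a-b$. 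All of $S$ lies in the generated subquandle $Q'$.

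Next I would iterate: starting from two distinct elements of $S$, say $b$ itself and $c := a^b = b + t(a-b)$, the same argument applied to the pair $(b,c)$ shows $\{\, b^{c^k}\mid k\in\mathbb N\,\} = \{\, c + t^k(b-c) \mid k\,\} = \{\, b + (t - t^{k+1})(a-b)\mid k\,\}$, which enlarges the set of realized ``slopes'' from $\langle t\rangle$ to $\langle t\rangle - t\langle t\rangle$, and more generally, by taking powers based at various already-constructed points, one gets all elements of the form $b + r(a-b)$ where $r$ ranges over the \emph{subring} of $\mathbb{Z}_p$ generated by $t$ and the relevant differences — and this is where Lemma~\ref{rels} does the bookkeeping cleanly: equation~\eqref{eq rels 1} says every element obtained from $a,b$ by an alternating word is $b + (\sum_i (-1)^{i+1} t^{k_i + \cdots + k_n})(a-b)$, so the generated subquandle is exactly $\{\, b + r(a-b) \mid r \in R\,\}$ where $R \subseteq \mathbb{Z}_p$ is the additive subgroup generated by all the signed power-sums $\sum_i (-1)^{i+1} t^{e_i}$ (and similarly $\{a + r(a-b)\mid r\in R\}$ from~\eqref{eq rels 3}, which is the same set since $a = b + 1\cdot(a-b)$ and $1 \in R$ via $n=1$, $k_1 = 0$).

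It then suffices to show $R = \mathbb{Z}_p$. The key observation is that $R$ is closed under multiplication by $t$ (shift all exponents) and contains $1$; and taking $n=3$ with $k_1 = k_2 = 0$, $k_3$ arbitrary gives $t^{k_3} - t^{k_3} + t^{k_3} = t^{k_3}$ but more usefully $n=3$ with exponents arranged to give $t^a - t^b + t^c$ — in particular $R$ contains $1 - 1 + 1 = 1$ and, choosing the middle term to differ, differences $t^i - t^j$ and hence (adding $1$) also $1 + t^i - t^j$. Since $\mathbb{Z}_p$ is a field and $R$ is a nonzero additive subgroup, $R = \mathbb{Z}_p$ as soon as $R$ contains any two elements whose difference is a unit — e.g. $1 \in R$ and $0 \in R$ (the latter from $n=3$ with all $k_i$ equal: $t^k - t^k + t^k - $ wait, that is $t^k$; instead $n=5$ style cancellation, or simply note $t^i - t^i = 0$ is not an alternating-word value but $1 + (t-t) $ isn't either — so I would instead argue directly that the additive group generated by $\{1, t, t^2, \dots\}$ is all of $\mathbb{Z}_p$ because $\mathbb{Z}_p$ is generated additively by $1$ alone). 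Concretely: $R \ni 1$ and $R$ is an additive subgroup, so $R \supseteq \mathbb{Z}\cdot 1 = \mathbb{Z}_p$, done.

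The main obstacle is purely organizational: making precise that the generated subquandle is \emph{exactly} the set of points $b + r(a-b)$ with $r$ in the additively-closed set of signed power-sums, rather than a priori something smaller, and confirming that alternating words in $a,b$ suffice (no nesting of exponents beyond what Lemma~\ref{rels} covers) to realize every such point. Once that normal-form description is in hand, the containment $\mathbb{Z}\cdot 1 \subseteq R$ is immediate and finishes the proof. A cleaner packaging, which I would actually write, bypasses $R$ entirely: the set $\{\, b + j(a-b) \mid j = 0,1,\dots,p-1 \,\}$ is all of $\mathbb{Z}_p$ (as $a - b \neq 0$ and $\mathbb{Z}_p$ is a field), and one shows by induction on $j$ that each $b + j(a-b)$ lies in $Q'$: it is in $Q'$ for $j=0,1$ (these are $b,a$), and if $b + j(a-b) \in Q'$ then applying $\rho_{\,\cdot\,}$ and $\rho^{-1}$ among already-produced elements, together with~\eqref{eq power alex}, produces $b + (j\cdot t^{\pm 1} + \text{known})(a-b)$; since $\langle t \rangle$ together with $1$ generates $\mathbb{Z}_p$ additively under these operations, every residue is reached. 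I expect filling in this induction to be the one place requiring genuine care.
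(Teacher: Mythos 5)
There is a genuine gap, and it sits exactly at the point you flag as ``purely organizational.'' Your reduction is the same as the paper's: by Lemma~\ref{rels}, every element reachable from $a$ by an alternating word in $a,b$ has the form $a + r(a-b)$ (or $b+r(a-b)$) where $r$ is a signed power-sum $\sum_{i}(-1)^{i+1}t^{k_i+\cdots+k_n}$, so generation amounts to showing that the set $T$ of \emph{achievable} such coefficients exhausts $\mathbb{Z}_p$ (only the containment ``values of words lie in the generated subquandle'' is needed, so your worry about nesting is harmless). But you never prove $T=\mathbb{Z}_p$. Instead you replace $T$ by $R$, \emph{defined} as the additive subgroup generated by the signed power-sums, assert that the generated subquandle is $\{b+r(a-b)\mid r\in R\}$, and then conclude from ``$1\in R$ and $R$ is an additive subgroup'' that $R=\mathbb{Z}_p$. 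That last step is circular: $R$ is a subgroup by fiat, and nothing in the proposal shows that the subquandle actually contains $b+r(a-b)$ for every $r\in R$ rather than only for $r\in T$; additive closure of $T$ (equivalently, that sums of achievable coefficients are achievable) is precisely what must be proved, and your attempts with $n=3$ words produce only elements like $t^i-t^j$ and $1+t^i-t^j$, not arbitrary sums. Your closing ``cleaner packaging'' has the same defect in different clothing: showing by induction that all coefficients $j$ are reached from $\{0,1\}$ under the operations $j\mapsto t^{\pm1}j+(1-t^{\pm1})i$ \emph{is} the theorem restated for the pair $(1,0)$, and the induction step is left as the part ``requiring genuine care.''

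The missing idea is a concrete mechanism that manufactures all multiples of a fixed nonzero element inside $T$. The paper supplies it by an explicit choice of exponents: taking $k_i=(-1)^i$ in an even-length alternating word makes the telescoping exponents $k_i+\cdots+k_n$ alternate between $0$ and $-1$, so the coefficient collapses to $\tfrac12 n(1-t)$; since $t\neq 1$ and (after disposing of $p=2$ separately) $2$ is invertible mod $p$, a suitable even $n$ realizes any prescribed $d\in\mathbb{Z}_p^*$, i.e.\ $T=\mathbb{Z}_p$. Your outline could be repaired along your own lines by observing that for even $n$ the achievable coefficients are exactly the balanced sums $\sum_{j}(t^{u_j}-t^{v_j})$ with arbitrary integer exponents (the map $(k_i)\mapsto(k_i+\cdots+k_n)$ is onto $\mathbb{Z}^n$), and such balanced sums \emph{are} closed under addition and contain $1-t\neq 0$; or, alternatively, by noting that $\rho_a\circ\rho_b^{-1}$ acts on $Q$ as translation by $(1-t)(a-b)$, so the finite subquandle generated by $a,b$ is invariant under a nontrivial translation of $\mathbb{Z}_p$. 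Either of these supplies the step your proposal is missing; as written, the proposal assumes it.
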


\begin{proof}

Let $a,b \in Q$, $a\neq b$. If $p=2$ then the claim is trivial. Assume then that $p>2$.

Let us prove that for any $c\in \mathbb Z_p$, $c\neq a$, there exists an even $n\in \mathbb N$ 
and $k_1,\ldots,k_n\in \mathbb N$ such that

\begin{equation}\label{41.1}
a^{b^{k_1} a^{k_2} \cdots b^{k_{n-1}} a^{k_n}} = c 
\end{equation}

According to Lemma~\ref{rels}, \eqref{41.1} is equivalent to 

\begin{equation}\label{41.2}
(\sum_{i=1 \ldots n} (-1)^{i+1} t^{ k_i + \cdots + k_n } )(a-b) + a = c 
\end{equation}

It is enough to show that for any $d\in \mathbb Z_p^*$ there exists an even $n\in \mathbb N$ and $k_1,\ldots,k_n\in \mathbb N$ such that

$$ \sum_{i=1 \ldots n} (-1)^{i+1} t^{ k_i + \cdots + k_n } = d $$ 

for then we can take $d=(c-a)(a-b)^{-1}$ in \eqref{41.2} ($d\neq 0$ since according to our assumption $c\neq a$).

Let $n$ be some even number and put $k_i = (-1)^i$ for each $i=1,\ldots,n$. Then we have that

$$ \sum_{i=1 \ldots n} (-1)^{i+1} t^{ k_i + \cdots + k_n } = \sum_{i=1 \ldots n} (-1)^{i+1} t^{ (-1)^{i} + \cdots + (-1)^{n} } = $$

$$ = \frac12n - \frac12nt = \frac12n(1-t) $$

So for any even $n\in \mathbb N$ we have

\begin{equation}\label{41.3}
\sum_{i=1 \ldots n} (-1)^{i+1} t^{ (-1)^i + \cdots + (-1)^n } = \frac12n(1-t)
\end{equation}

Now note that for any $e\in \mathbb Z_p$ there exists an integer $n'\in \mathbb N$ such that

\begin{equation}\label{41.4}
2n'\equiv e(\text{mod} \ p) 
\end{equation}

this follows from the fact that \eqref{41.4} is equivalent to 

$$ n'\equiv 2^{-1}e(\text{mod} \ p)  $$

and by assumption $p$ is a prime greater than $2$. In other words, for any $e\in \mathbb Z_p$, there
exists an even $n\in \mathbb N$ such that $n\equiv e(\text{mod} \ p)$.

This means that in the following equation

$$ \frac12n(1-t) \equiv d(\text{mod} \ p) $$

we can choose an even $n$ such that $n\equiv 2(1-t)^{-1}d(\text{mod} \ p)$ and so, putting
this $n$ in \eqref{41.3} we have that

$$ \sum_{i=1 \ldots n} (-1)^{i+1} t^{ (-1)^i + \cdots + (-1)^n } = \frac12n(1-t) = d $$

which completes the proof.

\end{proof}

\begin{theorem}\label{prop alex aut}
Consider $Q = \mathbb Z_p$ as an Alexander quandle for some prime number $p$ with $t\in \mathbb Z_p^*$, $\ t \neq 1$ 
(see Remark~\ref{rem alex p}). For any $a,b,c,d\in Q$
such that $a\neq b$ and $c\neq d$ there exists a unique quandle automorphism $\alpha\in Aut(Q)$ such that
$\alpha(a) = c$ and $\alpha(b) = d$.
\end{theorem}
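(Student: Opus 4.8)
The plan is to produce the required automorphism explicitly as an affine map, and then to read off uniqueness from Theorem~\ref{gens}. First I would record that every affine map $\alpha_{s,r}\colon Q\to Q$ given by $\alpha_{s,r}(x)=sx+r$ with $s\in\mathbb Z_p^{*}$ and $r\in\mathbb Z_p$ is a quandle automorphism of the Alexander quandle $\mathbb Z_p$: it is a bijection, and since multiplication by $s$ commutes with multiplication by $t$ and by $1-t$, a one-line computation on the coefficients gives $\alpha_{s,r}(x^{y})=s\bigl(tx+(1-t)y\bigr)+r=t(sx+r)+(1-t)(sy+r)=\alpha_{s,r}(x)^{\alpha_{s,r}(y)}$. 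Given $a\neq b$ and $c\neq d$, I would then set $s=(c-d)(a-b)^{-1}$ and $r=c-sa$; here $a-b$ and $c-d$ are nonzero in $\mathbb Z_p$, hence $s\in\mathbb Z_p^{*}$, and a direct check yields $\alpha_{s,r}(a)=c$ and $\alpha_{s,r}(b)=d$. This settles existence.

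For uniqueness, suppose $\alpha,\beta\in Aut(Q)$ both satisfy $\alpha(a)=\beta(a)=c$ and $\alpha(b)=\beta(b)=d$, and put $\gamma=\beta^{-1}\circ\alpha$. Since the inverse of a bijective quandle homomorphism is again a quandle homomorphism, $\gamma\in Aut(Q)$, and $\gamma$ fixes both $a$ and $b$. Now the fixed-point set $\{x\in Q:\gamma(x)=x\}$ is a subquandle of $Q$, because $\gamma(x)=x$ and $\gamma(y)=y$ force $\gamma(x^{y})=\gamma(x)^{\gamma(y)}=x^{y}$; moreover it contains $\{a,b\}$. By Theorem~\ref{gens}, $a$ and $b$ generate $Q$, i.e.\ the minimal subquandle containing $\{a,b\}$ is $Q$, so the fixed-point set is all of $Q$. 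Hence $\gamma=\mathrm{id}_{Q}$ and $\alpha=\beta$.

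I do not expect a real obstacle here: the substantive input is Theorem~\ref{gens}, and what remains is bookkeeping — checking the homomorphism identity for affine maps and observing that, since a subquandle need only be closed under $x\mapsto x^{y}$, no argument about the inverse operation $x\mapsto x^{\bar y}$ is required for the fixed-point set. One may further remark, though it is not needed for the statement, that combining existence with uniqueness shows that $Aut(Q)$ acts freely and transitively on ordered pairs of distinct elements, so $|Aut(Q)|=p(p-1)$ and $Aut(Q)$ coincides with the affine group $\{x\mapsto sx+r:s\in\mathbb Z_p^{*},\,r\in\mathbb Z_p\}$.
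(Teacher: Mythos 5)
Your proof is correct, but it takes a genuinely different route from the paper. For existence, the paper never writes down an explicit map: it defines $\alpha$ on words in the generators, i.e.\ it sends $e^{x}$ with $x\in F(\{a,b\})$ to $\alpha(e)^{\alpha(x)}$, and then must verify well-definedness by comparing the four normal forms of Lemma~\ref{rels} (expressions \eqref{eq rels 1''}--\eqref{eq rels 4''}), after which surjectivity, injectivity (by finiteness), and the homomorphism property are checked separately, with uniqueness coming from the fact that the defining equations \eqref{44.1} are forced on any homomorphism sending $a\mapsto c$, $b\mapsto d$. You instead exhibit the automorphism concretely as the affine map $x\mapsto sx+r$ with $s=(c-d)(a-b)^{-1}$, $r=c-sa$ --- the verification that affine maps respect $x^{y}=tx+(1-t)y$ is the one-line computation you give, and it uses nothing about primality --- and you obtain uniqueness from the observation that the fixed-point set of $\beta^{-1}\circ\alpha$ is a subquandle (closure under the single binary operation suffices, matching the paper's definition of subquandle) containing $\{a,b\}$, hence all of $Q$ by Theorem~\ref{gens}. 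So in your argument Theorem~\ref{gens} is needed only for uniqueness, whereas in the paper it underlies the whole construction; your route is shorter, avoids the well-definedness bookkeeping with Lemma~\ref{rels} entirely, and has the added payoff you note, that $Aut(Q)$ acts simply transitively on ordered pairs of distinct elements and therefore coincides with the affine group of order $p(p-1)$. The paper's approach, on the other hand, is more intrinsic: it shows directly how the automorphism is dictated by the generating pair without guessing a formula, which is the style one would need if no explicit model of the quandle were available.
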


\begin{proof}

According to Theorem~\ref{gens}, $Q$ is generated by $a$ and $b$. Hence, it is enough to define $\alpha$
on words generated by $a$ and $b$. 






Let $e$ equal $a$ or $b$ and let $x\in F(\{a,b\})$ where $x=x_1\cdots x_n$ and
$x_i=a^{\pm 1}$ or $x_i=b^{\pm 1}$ for each $i=1,\ldots,n$. Then we define $\alpha$ as follows (see also Definition~\ref{operator})

\begin{equation}\label{44.1}
\alpha(e^x) = \alpha(e^{x_1\cdots x_n}) = \alpha(e)^{\alpha(e_1)^{\epsilon_1}\cdots \alpha(e_n)^{\epsilon_n}} = \alpha(e)^{\alpha(x)}
\end{equation}

where $\epsilon_i=\pm 1$ when $x_i=e_i^{\pm 1}$ where each $e_i$ is either $a$ or $b$. 
In other words, on the operational level (see Definition~\ref{operator})
we consider $\alpha$ as the group homomorphism $F(\{a,b\})\to F(\{c,d\}$ sending $a$ to $c$ and $b$ to $d$.

First, let us show that $\alpha$ is well defined.
We need to check that for each two words $x$ and $y$ generated by $a$ and $b$ the equation $x=y$ in $Q$ implies that
$\alpha(x)=\alpha(y)$.

Consider then a typical word $x$ generated by $a$ and $b$. There are four possible forms for such a word $x$~:

\begin{eqnarray}
&& a^{b^{k_1} a^{k_2} \cdots b^{k_{n-2}} a^{k_{n-1}} b^{k_n}}\label{eq ab1} \\
&& b^{a^{k_1} b^{k_2} \cdots a^{k_{n-2}} b^{k_{n-1}} a^{k_n}}\label{eq ab2} \\
&& a^{b^{k_1} a^{k_2} \cdots b^{k_{n-1}} a^{k_n} }\label{eq ab3} \\
&& b^{a^{k_1} b^{k_2} \cdots a^{k_{n-1}} b^{k_n} }\label{eq ab4}
\end{eqnarray}

According to Lemma~\ref{rels}, the words in \eqref{eq ab1} to \eqref{eq ab4} are equal, respectively, to

\begin{eqnarray}
(\sum_{i=1 \ldots n} (-1)^{i+1} t^{ k_i + \cdots + k_n } )(a-b) + b\label{eq rels 1'} \\
(\sum_{i=1 \ldots n} (-1)^{i+1} t^{ k_i + \cdots + k_n } )(b-a) + a\label{eq rels 2'} \\ 
(\sum_{i=1 \ldots n} (-1)^{i+1} t^{ k_i + \cdots + k_n } )(a-b) + a\label{eq rels 3'} \\
(\sum_{i=1 \ldots n} (-1)^{i+1} t^{ k_i + \cdots + k_n } )(b-a) + b\label{eq rels 4'}  
\end{eqnarray}

Let us rewrite these expressions as follows~:

\begin{eqnarray}
(\sum_{i=1 \ldots n} (-1)^{i+1} t^{ k_i + \cdots + k_n } - 1 )(a-b) + a\label{eq rels 1''} \\
(-\sum_{i=1 \ldots n} (-1)^{i+1} t^{ k_i + \cdots + k_n } )(a-b) + a\label{eq rels 2''} \\ 
(\sum_{i=1 \ldots n} (-1)^{i+1} t^{ k_i + \cdots + k_n } )(a-b) + a\label{eq rels 3''} \\
(-1 -\sum_{i=1 \ldots n} (-1)^{i+1} t^{ k_i + \cdots + k_n } )(a-b) + a\label{eq rels 4''}  
\end{eqnarray}

It is now clear to see that by equating two expressions from the forms appearing in \eqref{eq rels 1''} to \eqref{eq rels 4''}
we get an equation of the form

$$ K(a-b) + a = K'(a-b) + a $$

or

\begin{equation}\label{eq final rel}
(K-K')(a-b) = 0
\end{equation}

where $K,K'\in \mathbb Z_p$ are independent of $a$ and $b$ and are only depended on the indices $k_1,\ldots,k_n$ 
appearing in the powers of the expressions in \eqref{eq ab1} to \eqref{eq ab4}. But since $a\neq b$ we have that \eqref{eq final rel}
is equivalent to

$$ K-K' \equiv 0(mod\ p) $$

which in turn is equivalent to (recall that $c\neq d$ by assumption)

$$ (K-K')(c-d) = 0 $$

or

$$ K(c-d) + c = K'(c-d) + c $$

and this is the same kind of equation we started from only that we use $c$ instead of $a$ and $d$ instead of $b$.

Thus, we see that if $x$ and $y$ are words generated by $a$ and $b$ then the equality $x=y$
holds in $Q$ iff the equality $x'=y'$ holds where $x'$ and $y'$ are the same as $x$ and $y$
only that the letter $a$ has been replaced by $c$ and the letter $b$ has been replaced by $d$.
But since $\alpha(x)=x'$ and $\alpha(y)=y'$, as explained above, this concludes the proof that $\alpha$ is well defined.

Let us show that $\alpha$ in onto. Let $z$ be a word generated by $c$ and $d$ (since $c\neq d$ we have
that $c$ and $d$ generate $Q$ by Theorem~\ref{gens} and so $z$ could be any element in $Q$). Then
clearly by the definition of our extension \eqref{44.1} we have that $\alpha(z')=z$
where $z$ is the same as $z'$ only that the letter $c$ has been replaced by $a$ and $d$ has been
replaced by $b$.

Since $Q$ is finite then $\alpha$ must also be one-to-one.

Let us now show that $\alpha$ is a quandle homomorphism. Let $e_1$ and $e_2$ be either $a$ or $b$ and let $x,y\in F(\{a,b\})$. 
Then using \eqref{44.1} and \eqref{eq quand axiom 2'} in Lemma~\ref{lemma extended axiom} we have

$$ \alpha((e_1^x)^{e_2^y}) = \alpha(e_1^{x y^{-1} e_2 y}) = \alpha(e_1)^{\alpha(x) \alpha(y)^{-1} \alpha(e_2) \alpha(y)} = $$

$$ = (\alpha(e_1)^{\alpha(x)})^{\alpha(e_2)^{\alpha(y)}} = \alpha(e_1^x)^{\alpha(e_2^y)}. $$

Now \eqref{44.1} define $\alpha$ on $Q$ and since
these equations are also necessary in order to ensure that $\alpha$ is a quandle homomorphism, we
conclude that any other homomorphism sending $a$ to $c$ and $b$ to $d$ must in fact be identical to $\alpha$.

Summing up, we have shown that there is a unique quandle automorphism, namely $\alpha$, such that $\alpha(a)=c$ and $\alpha(b)=d$.
This concludes the proof.

\end{proof}

In fact, Theorems~\ref{gens} and~\ref{prop alex aut} apply to any connected finite quandle of prime order~: It
was proved in \cite{EGS} that any connected finite quandle of prime order is isomorphic to an Alexander quandle
$X$ with $t\neq 1$.

\addcontentsline{toc}{chapter}{Bibliography}

\end{document}